\documentclass[11pt, reqno, twoside, makeidx]{amsart}
\usepackage[margin=2.7cm, marginpar=1cm]{geometry}

\usepackage{wrapfig}
\usepackage{marginnote}
\usepackage{hyperref}
\usepackage[english]{babel}
\usepackage[utf8]{inputenc}
\usepackage[T1]{fontenc}
\usepackage{amsmath,amsthm,amssymb,amsfonts}
\numberwithin{equation}{section}
\usepackage{enumerate}
\usepackage{faktor}
\usepackage{dsfont}
\usepackage{scalerel,stackengine}
\usepackage{textcomp}
\usepackage{graphicx}
\usepackage{extarrows}
\usepackage{nicematrix}
\usepackage{epigraph}
\usepackage{graphicx}
\usepackage{subcaption}
\usepackage{sidecap}
\usepackage{indentfirst}
\usepackage{tikz}
\usepackage{tikz-cd}
\usepackage{tkz-euclide}
\usetikzlibrary{shapes.misc,arrows,matrix,patterns,decorations.markings,positioning}
\tikzset{cross/.style={cross out, draw=black, minimum size=2*(#1-\pgflinewidth), inner sep=0pt, outer sep=0pt},
cross/.default={4.5pt}}
\usepackage{pgfplots}
\usepackage{caption}
\usepackage{float}
\usepackage{color}
\usepackage{epstopdf}
\usepackage{epsfig}
\usepackage{stmaryrd}
\usepackage{color}
\usepackage{geometry}
\usepackage{multicol}
\usepackage{verbatim}
\usepackage{wrapfig}
\usepackage{float}

\DeclareMathOperator{\Ker}{Ker }
\DeclareMathOperator{\Imm}{Im }
\DeclareMathOperator{\tb}{tb}

\DeclareMathOperator{\Spin}{Spin}

\renewcommand{\geq}{\geqslant}
\renewcommand{\leq}{\leqslant} 
\renewcommand{\epsilon}{\varepsilon}

\newcommand{\Q}{\mathbb{Q}}

\newcommand{\s}{\mathfrak{s}}

\newtheorem{teo}{Theorem}[section]
\newtheorem*{teo*}{Theorem}
\newtheorem{lemma}[teo]{Lemma}
\newtheorem{prop}[teo]{Proposition}
\newtheorem*{prop*}{Proposition}

\newtheorem{cor}[teo]{Corollary}

\usepackage{xpatch}
\makeatletter
\xpatchcmd{\@thm}{\thm@headpunct{.}}{\thm@headpunct{}}{}{}
\makeatother

\pgfplotsset{compat=1.18}
\begin{document}
\title[Hat and plus version of the contact invariant are not equivalent]{The hat and plus version of the Heegaard Floer contact invariant are not equivalent}
\author{Alberto Cavallo and Irena Matkovi\v c}
\address{HUN-REN Alfr\'ed R\'enyi Insitute of Mathematics, Budapest 1053, Hungary}
\email{acavallo@impan.pl}
\address{Uppsala Universitet, Uppsala 751 06, Sweden}
\email{irma6504@student.uu.se}
\subjclass[2020]{57K18, 57K33, 57K43, 32Q35}


\begin{abstract}
 We advance Matkovi\v c ideas, originally applied to complete the classification of tight structures on small Seifert fibred $L$-spaces, to show the existence of contact structures on Brieskorn spheres which are tight and zero-twisting. This uncovers a phenomenon that has never appeared in literature before: namely, that a contact structure $\xi$ on a 3-manifold can be such that $\widehat c(\xi)$ is non-vanishing, but $c^+(\xi)$ is zero. 
\end{abstract}

\maketitle

\thispagestyle{empty}

\section{Introduction}
In \cite{CM-negative} we classified all the fillable structures on a Seifert fibred space with negative-definite standard graph $\Gamma$; moreover, we showed that they all appear as the boundary of the same compact 4-manifold, which is obtained by blowing down $\Gamma$ completely. Later on, in \cite{CM} we classified all the negative-twisting structures when the standard graph is indefinite.
Part of these results can be summarised in the following theorem; we refer to \cite{CM-negative,CM} for the definition of negative- and zero-twisting tight structures, and the notation. 
\begin{teo}[Cavallo-Matkovi\v c]
 \label{teo:old} 
 Suppose that $M=M(e_0;r_1,...,r_n)$ is a Seifert fibred space, and that $\xi$ is a contact structure on $M$. If $M$ has negative-definite standard graph then the following are equivalent:
 \begin{itemize}
     \item the structure $\xi$ is Stein fillable;
     \item the structure $\xi$ is symplectically fillable;
     \item the structure $\xi$ has non-vanishing contact invariant $c^+(\xi)$;
     \item the structure $\xi$ is negative-twisting.
 \end{itemize}
 If $M$ has indefinite standard graph, is not an $L$-space, and $n=3$ then the following are equivalent:
 \begin{itemize}
     \item the structure $\xi$ is symplectically fillable;
     \item the structure $\xi$ has non-vanishing contact invariant $c^+(\xi)\in HF_\emph{red}(-M)$;
     \item the structure $\xi$ is negative-twisting.
 \end{itemize}
\end{teo}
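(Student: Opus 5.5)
This is a summary of results from \cite{CM-negative,CM}, so I would prove it by arranging the standard implications into a cycle, handling each case separately. For the negative-definite case the chain is
\[
\text{Stein fillable}\Rightarrow\text{symplectically fillable}\Rightarrow c^+(\xi)\neq 0\Rightarrow\text{negative-twisting}\Rightarrow\text{Stein fillable}.
\]
The first implication is immediate. The second is the Ozsv\'ath--Szab\'o (Ghiggini, Plamenevskaya) non-vanishing of $c^+$ for strongly or weakly fillable structures; since $M$ is a rational homology sphere here, weak and strong fillability agree up to deformation.

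For $c^+(\xi)\neq0\Rightarrow$ negative-twisting, I argue contrapositively. A tight structure that is not negative-twisting has twisting number zero, so it contains a Legendrian regular fibre with twisting $0$. In \cite{CM-negative} such structures are shown to arise by Legendrian surgery from, or by decomposing along, a torus where Giroux torsion or an overtwisted-after-stabilisation argument applies. The cleanest route I would try first is the following: zero-twisting structures are obtained from structures on $S^1\times\Sigma$-type pieces, and the $U$-action on $HF^+$ kills $c^+$ because the contact class lies in the image of a map that factors through a twisted piece. For negative-definite $\Gamma$, I would rather use that such $M$ is realised as the boundary of a plumbing, and that $c^+$ is determined by its image in $HF^+(-M)$ computed via N\'emethi's lattice cohomology. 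Zero-twisting structures then have vanishing $c^+$ by the computation in \cite{CM-negative}.

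The last implication, negative-twisting $\Rightarrow$ Stein fillable, is constructive. Negative-twisting structures are classified by the choices of signs of stabilisations on the Legendrian surgery diagram coming from $\Gamma$. Blowing down $\Gamma$ completely gives a Legendrian surgery presentation, with all surgery coefficients $\mathrm{tb}-1$, realising every such structure as the boundary of the same Stein domain. I would count these and match them with the classification to conclude.

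For the indefinite, non-$L$-space case with $n=3$, the chain is symplectically fillable $\Rightarrow c^+(\xi)\neq0$ in $HF_{\rm red}(-M)$ $\Rightarrow$ negative-twisting $\Rightarrow$ symplectically fillable.
\begin{itemize}
\item For the first step, $c^+\neq0$ again follows from fillability. The class lies in $HF_{\rm red}$ because $c^+$ is $U$-torsion-free only in the infinite tower in special cases, and for these Seifert spaces the filling forces the class to be reduced via the Plamenevskaya argument on $b_2^+$. This is the step I am least sure of.
\item The second step is the same zero-twisting vanishing argument as before.
\item The third step uses the classification of negative-twisting structures in \cite{CM}, which come from explicit Legendrian surgery diagrams and hence are Stein fillable.
\end{itemize}

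The main obstacle is the vanishing of $c^+$ for zero-twisting structures. Everything else is either standard Floer theory or explicit surgery constructions.
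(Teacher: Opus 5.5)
The paper does not prove this theorem; it is quoted from \cite{CM-negative,CM}. Your outline therefore has to stand on its own, and it fails exactly where the Floer-theoretic content lies.

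In the negative-definite case the implication $c^+(\xi)\neq0\Rightarrow$ negative-twisting is never actually argued, and the mechanisms you float cannot work. Giroux torsion would force $\widehat c(\xi)=0$ as well (Ghiggini--Honda--Van Horn-Morris). But this very paper exhibits zero-twisting structures $\xi_k$, $k\leq-1$, on manifolds with negative-definite standard graph and $\widehat c(\xi_k)\neq0$. So whatever kills $c^+$ must see the difference between $c^+$ and $\widehat c$, and none of your suggestions does:
\begin{itemize}
\item ``the $U$-action kills $c^+$'' is empty, since $Uc^+(\xi)=0$ for every $\xi$;
\item ``factors through a twisted piece'' is not an argument;
\item ``by the computation in \cite{CM-negative}'' just restates the claim.
\end{itemize}
The mechanism visible in the paper is of a different nature. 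One has $c^+(\xi)=\rho^*(\widehat c(\xi))$. The class $\widehat c(\xi)$ of a zero-twisting structure is the full path of its magic $C$ (Proposition~\ref{prop:zero}). For negative-definite graphs such full paths lie in $\Ker\rho^*$ by \cite{CM}.

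In the indefinite case you reuse ``the same zero-twisting vanishing argument'', but that lemma is false there. The paper recalls Lisca--Stipsicz's zero-twisting tight structure on $M(-1;\frac{5}{12},\frac13,\frac13)$ whose $d_3$ equals the correction term, so $c^+\neq0$. This manifold has indefinite graph, since $-1+\frac{5}{12}+\frac23=\frac1{12}>0$. It is also not an $L$-space: the Jankins--Neumann--Naimi condition holds with $m=2$, $a=1$, so it carries transverse foliations. What is true is only that a zero-twisting $c^+$ is either $0$ or $\Theta^+$, i.e.\ it vanishes in $HF_{\rm red}$. That is why the statement is phrased in $HF_{\rm red}$, and it is exactly the dichotomy used for $1\leq k\leq35$ in the proof of Theorem~\ref{teo:main}.

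Your first indefinite step has the opposite problem. Fillability gives $c^+\neq0$ but says nothing about its $HF_{\rm red}$-component; on an $L$-space $HF_{\rm red}=0$ although fillable structures exist. A $b_2^+$ argument would need every filling to have $b_2^+>0$, which you do not establish. Tellingly, you never use that $M$ is not an $L$-space or that $n=3$. These hypotheses must enter somewhere, for instance by first proving that fillable structures on such $M$ are negative-twisting, which is where Matkovi\v c's fillability criterion \cite{Irena(f)} becomes relevant.
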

When the graph is negative-definite the only tight structures possibly left out of our classification would necessarily  have vanishing invariant $c^+$, and be zero-twisting. Generalising the techniques used by Matkovi\v c in \cite{Irena}, which together with \cite{GLS,GLS=>,Wu} led to the classification of contact structures on small Seifert fibred $L$-spaces with indefinite standard graph, that is with $e_0\geq-1$ according to our notation, we show that Theorem \ref{teo:old} cannot be strengthened. 
\begin{teo}
 \label{teo:main} 
 All the $3$-manifolds in the infinite family of Seifert fibred spaces $S^3_{k}(T_{8,13})$ for $k\leq35$ carry a zero-twisting tight contact structure $\xi_k$ such that $\widehat c(\xi_k)\neq0$, see Figure \ref{Structures}. Furthermore, they provide the first examples of:
 \begin{enumerate}
     \item contact $3$-manifolds with non-vanishing $\widehat c$, but $c^+$ equal to zero;
     \item zero-twisting structures such that their $d_3$-invariant is different from the correction term;
     \item tight structures on a small Seifert fibred space that is not an $L$-space which are not (weakly) symplectically fillable.
 \end{enumerate}
\end{teo}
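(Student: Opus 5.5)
The plan has four stages: identify $M_k$ as a Seifert space, build $\xi_k$ by a Legendrian surgery diagram, detect $\widehat c(\xi_k)\neq0$ via Matkovi\v c's doubly-pointed diagram method, and then read off the three consequences from Theorem \ref{teo:old}.

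\textbf{Seifert description.} Surgery on a torus knot gives a small Seifert fibred space. So $M_k=S^3_k(T_{8,13})$ can be written as $M(e_0;r_1,r_2,r_3)$, with the singular fibres coming from the two cores of the torus-knot decomposition together with the surgery slope. The bound $k\leq35$ should be exactly the range in which the standard graph is negative-definite. Note that $2g-1=83$ and $pq=104$, so $k\leq35$ lies far below the $L$-space threshold. Hence $M_k$ is not an $L$-space, and the standard graph is negative-definite, so the first half of Theorem \ref{teo:old} applies.

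\textbf{Construction of $\xi_k$.} I would build $\xi_k$ as a contact surgery diagram, as in Figure \ref{Structures}. Take a Legendrian realisation $L$ of $T_{8,13}$ with maximal $\tb=pq-p-q=83$, and perform contact $(k-83)$-surgery; since $k\leq35$, this coefficient is negative. By the Ding--Geiges algorithm this becomes a chain of Legendrian $(-1)$-surgeries, which would only yield Stein fillable (hence negative-twisting) structures.

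To obtain a zero-twisting structure I would instead use a non-maximal or non-standard Legendrian representative. Following \cite{Irena}, one option is to replace part of the surgery by a contact $(+1)$-surgery on a Legendrian push-off, chosen so that the Seifert fibre has a Legendrian realisation of twisting number $0$. In practice I would mimic Matkovi\v c's $L$-space constructions: present $M_k$ as surgery on a Legendrian link in which the regular fibre is a Legendrian unknot with $\tb=-1$ surgered by $+1$, giving twisting $0$ by construction.

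\textbf{Non-vanishing of $\widehat c$.} This is the main obstacle, and I would follow the method of \cite{Irena}. Since $\xi_k$ comes from $(+1)$-surgery on a Legendrian knot $L'$ in a Stein fillable $(Y,\xi')$, one has
\[
\widehat c(\xi_k)=F_{W}(\widehat c(\xi')),
\]
where $W$ is the corresponding $2$-handle cobordism. To compute this map, use a doubly pointed Heegaard diagram and the LOSS/knot Floer description of the Legendrian invariant, reducing the question to a computation in the surgery exact triangle. Concretely, I would show that $\widehat c(\xi')$ does not lie in the image of the preceding map in the triangle. This should follow from a grading and Spin$^c$ argument: in the relevant Spin$^c$ structure the group $\widehat{HF}(-Y)$ is small and can be computed explicitly from the torus-knot complex of $T_{8,13}$, via the mapping cone formula.

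\textbf{Consequences (1)--(3).} Tightness follows from $\widehat c\neq0$. Since $\xi_k$ is zero-twisting and the graph is negative-definite, Theorem \ref{teo:old} forces $c^+(\xi_k)=0$, which gives (1); it also shows $\xi_k$ is not symplectically fillable. Weak fillability is ruled out in the same way, since weak fillings of rational homology spheres can be made strong. This gives (3), as $M_k$ is not an $L$-space.

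For (2), observe that $\widehat c\neq0$ but $c^+=0$ means $\widehat c(\xi_k)$ maps to zero in $HF^+$. Hence it is not in the image of the tower $U^N\mathcal T^+$ bottom element. Since $\widehat c$ is homogeneous of degree $-d_3(\xi_k)$, and its image is not the bottom of the tower, the grading must differ from $d(-M_k,\mathfrak t)$.
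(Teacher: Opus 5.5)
\textbf{Genuine gap.} Your first step contains a false claim that the rest of the argument depends on. For $S^3_k(T_{8,13})\simeq M(-1;\frac38,\frac8{13},\frac1{104-k})$ the Euler number is $-\frac1{104}+\frac1{104-k}$. So the standard graph is negative-definite only for $k\le-1$. For $k=0$ the manifold has $b_1=1$, and for $1\le k\le35$ the standard graph is indefinite.

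Consequently the first half of Theorem \ref{teo:old} gives $c^+(\xi_k)=0$ only for $k\le-1$. For $1\le k\le35$, the indefinite half of Theorem \ref{teo:old} only says that a zero-twisting structure has no non-zero contact class in $HF_{\mathrm{red}}$. It does not exclude $c^+(\xi_k)=\Theta^+_{\s_{\xi_k}}$, the bottom of the tower, and this does happen for zero-twisting structures (the Lisca--Stipsicz example on $M(-1;\frac5{12},\frac13,\frac13)$).

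The missing idea is a grading comparison. Since $-S^3_k(T_{8,13})$ is presented by the negative-definite graph $G^*$ with one bad vertex, $c^+(\xi_k)\neq0$ would force $c^+(\xi_k)=\Theta^+$, and hence $M(C_k)=d(-S^3_k(T_{8,13}),\s_{\xi_k})$. The paper computes $M(C_k)=\frac{-k^2+153k-5184}{4k}$. It bounds $d$ from below by exhibiting an explicit vector $V_k$ on the standard graph $G$ whose full path ends correctly, using that $d-M(V)$ is odd. The resulting strict inequality $M(C_k)<-\frac{15}{2}<d$ holds exactly for $k\le35$. That is where the bound in the statement comes from, not from negative-definiteness.

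The case $k=0$ needs separate handling; the paper uses that $C_0$ is non-torsion on the boundary. Your weak-to-strong argument for (3) also fails there. For $1\le k\le35$, non-fillability can be recovered from the indefinite half of Theorem \ref{teo:old}; the paper instead uses Matkovi\v c's complementary-legs criterion.

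\textbf{Property (2).} Your derivation runs in the wrong direction. Knowing $\widehat c(\xi_k)\neq0$ and $c^+(\xi_k)=0$ does not by itself force the degree of $\widehat c(\xi_k)$ to differ from $d$. In general, $\widehat{HF}$ in degree $d$ can contain non-zero classes in the kernel of $\widehat{HF}\to HF^+$ (images of the connecting map of the $U$-exact sequence), and you invoke nothing that rules this out. In the paper, both (2) and the case $1\le k\le35$ of (1) follow from the strict inequality $M(C_k)<d$.

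\textbf{Non-vanishing of $\widehat c$.} Your treatment is only a sketch: it gives no way to determine $\widehat c(\xi')$ or to evaluate the $(+1)$-surgery map in the relevant $\Spin^c$ structure. Matkovi\v c's method is not a doubly pointed diagram computation; it is the magic-$C$ identification. In the paper, removing one $(+1)$-surgery lands on the $L$-space $M(0;r_1,r_2,r_3)$, where Theorem \ref{teo:Irena} identifies $\widehat c(\xi')$ with the full path of the magic $C'$. What is used is that this manifold is an $L$-space, not fillability of $\xi'$. The surgery cobordism becomes a blow-up of the central vertex, so $\widehat c(\xi_k)$ is the full-path class of $C'+e_1=C_k$ (Proposition \ref{prop:zero}). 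Non-vanishing then reduces to the explicit check that $[C_k]$ ends correctly (Lemma \ref{lemma:magic_C}).
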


The manifolds in Theorem \ref{teo:main} are also one of the first examples of Seifert fibred spaces which admit both negative- and zero-twisting structures compatible with the same orientation. However, the existence of these has already been known to Lisca and Stipsicz, as can be understood from the comment below \cite[Theorem 1.2]{LS}. In fact, they mention a zero-twisting tight structure on $M(-1;\frac{5}{12}, \frac{1}{3}, \frac{1}{3})$ whose $d_3$-invariant is equal to the correction term; hence, with non-zero $c^+$. According to a result of Matkovi\v c \cite[Theorem 1.1]{Irena(f)} this structure is also not fillable.

Note that when $k\leq-1$ the manifolds above have negative-definite standard graph, while only when $k\geq83$ they are $L$-spaces. These structures appear even on Brieskorn spheres as we have that $\Sigma(8,13,105)\simeq S^3_{-1}(T_{8,13})$.

\begin{cor}
 \label{cor:main}
 Suppose that $M$ is a Brieskorn sphere among $\Sigma(8,13,105)$ and $-\Sigma(8,13,103)$. Then there exist zero-twisting structures on $M$ that are not fillable, and are therefore not included in our classification in Theorem \ref{teo:old}. 
\end{cor}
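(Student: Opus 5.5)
The corollary follows from Theorem \ref{teo:main} once we identify the two Brieskorn spheres with members of the family $S^3_k(T_{8,13})$, $k\leq 35$. Integer surgery on the torus knot $T_{p,q}$ with coefficient $pq\pm 1$ or similar gives a Seifert fibred space. Specifically, $S^3_{-1}(T_{p,q})\cong\Sigma(p,q,pq+1)$ and $S^3_{+1}(T_{p,q})\cong-\Sigma(p,q,pq-1)$. With $p=8$, $q=13$ we have $pq=104$. This gives $S^3_{-1}(T_{8,13})\cong\Sigma(8,13,105)$, as already stated in the paper, and $S^3_{1}(T_{8,13})\cong-\Sigma(8,13,103)$. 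Both coefficients $k=\pm1$ satisfy $k\leq 35$, so Theorem \ref{teo:main} provides on each of them a zero-twisting tight structure $\xi_k$ with $\widehat c(\xi_k)\neq0$ and $c^+(\xi_k)=0$. To confirm the orientations I would use the standard surgery description: the exceptional fibre of $S^3_n(T_{p,q})$ has multiplicity $|pq-n|$, and orientation is checked via the sign of $H_1$ or the Seifert invariants.

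Next, non-fillability. For $k=1$, item (3) of Theorem \ref{teo:main} already asserts that these structures are not weakly fillable, provided $-\Sigma(8,13,103)$ is not an $L$-space. This holds since $L$-spaces only arise for $k\geq 83$. For $k=-1$, the standard graph is negative-definite, since this holds whenever $k\leq -1$. Then the first part of Theorem \ref{teo:old} shows that symplectic fillability is equivalent to $c^+(\xi)\neq0$. Because $c^+(\xi_{-1})=0$, the structure $\xi_{-1}$ is not fillable. The same argument works directly for $k=1$ if one prefers: weak fillability of a rational homology sphere (here an integral homology sphere) implies $c^+\neq0$ with twisted coefficients, which reduces to untwisted $c^+\neq 0$ since $b_1=0$. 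So in both cases vanishing of $c^+$ excludes fillability.

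Finally, the structures are not among those classified in Theorem \ref{teo:old}. That classification covers exactly the negative-twisting structures, which are the fillable ones, and $\xi_k$ is zero-twisting and non-fillable. The main obstacle is bookkeeping rather than mathematics: matching the surgery coefficient signs with the Brieskorn orientations, and checking that the structures of Figure \ref{Structures} at $k=\pm1$ are genuinely the ones covered by Theorem \ref{teo:main}. I would check this by computing $H_1$ orders and the Seifert invariants $M(e_0;r_1,r_2,r_3)$ for both manifolds.
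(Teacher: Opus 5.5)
Your proposal is correct and follows essentially the paper's argument. You identify $\Sigma(8,13,105)\cong S^3_{-1}(T_{8,13})$ and $-\Sigma(8,13,103)\cong S^3_{+1}(T_{8,13})$, then combine Theorem \ref{teo:main} (the structures are zero-twisting, tight, with $c^+(\xi_{\pm1})=0$) with Theorem \ref{teo:old} (fillable $\iff$ $c^+\neq0$ $\iff$ negative-twisting) to conclude non-fillability; for $k=1$ you cite item (3) or the Ozsv\'ath--Szab\'o weak-filling obstruction on an integral homology sphere instead, which is only a cosmetic variation.
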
 

\subsection*{Acknowledgements} {\smaller[1] We thank the Matematiska institutionen at Uppsala universitet for their friendly hospitality. A.C. has been partially supported by the HORIZON-ERC-2023-ADG 101141468 KnotSurf4d project.}

\section{Zero-twisting structures on small Seifert fibred spaces}
We start our discussion here by recalling the seminal result of Lisca and Stispicz about zero-twisting structures. They showed in \cite{LS} that every such structure on the space $M(-1;r_1,r_2,r_3)$ with $r_i\in\Q\:\cap(0,1)$ has a contact surgery presentation which consists of an appropriate Legendrian link whose components are a $T_{5,-5}$ and some unknots determined as follows. 

Let $r_i=[m^i_1,...,m^i_{k_i}]$ for $i=1,2,3$ be the negative continued fraction of $r_i$, so that each $m^i_j$ is the framing of a vertex of the standard graph; then there are three chains of unknots, attached to the last three components of $T_{5,-5}$, and the $i$-th chain contains $k_i-1$ unknots. The $\tb$-number of each component of $T_{5,-5}$ is given by the vector $(-1,-1,m^1_1,m^2_1,m^3_1)$, while on the three legs it is given by $(m^i_2+1,...,m^i_{k_i}+1)$ for $i=1,2,3$. The structure is then obtained by performing contact $(+1)$-surgery on the first two components of $T_{5,-5}$, and contact $(-1)$-surgery on all the other Legendrian knots. An example of this construction can be seen in Figure \ref{Structures}.

\begin{figure}[ht]
 \centering
  \def\svgwidth{12cm}
\begingroup%
  \makeatletter%
  \providecommand\color[2][]{%
    \errmessage{(Inkscape) Color is used for the text in Inkscape, but the package 'color.sty' is not loaded}%
    \renewcommand\color[2][]{}%
  }%
  \providecommand\transparent[1]{%
    \errmessage{(Inkscape) Transparency is used (non-zero) for the text in Inkscape, but the package 'transparent.sty' is not loaded}%
    \renewcommand\transparent[1]{}%
  }%
  \providecommand\rotatebox[2]{#2}%
  \newcommand*\fsize{\dimexpr\f@size pt\relax}%
  \newcommand*\lineheight[1]{\fontsize{\fsize}{#1\fsize}\selectfont}%
  \ifx\svgwidth\undefined%
    \setlength{\unitlength}{1482.20099502bp}%
    \ifx\svgscale\undefined%
      \relax%
    \else%
      \setlength{\unitlength}{\unitlength * \real{\svgscale}}%
    \fi%
  \else%
    \setlength{\unitlength}{\svgwidth}%
  \fi%
  \global\let\svgwidth\undefined%
  \global\let\svgscale\undefined%
  \makeatother%
  \begin{picture}(1,0.56285895)%
    \lineheight{1}%
    \setlength\tabcolsep{0pt}%
    \put(0,0){\includegraphics[width=\unitlength,page=1]{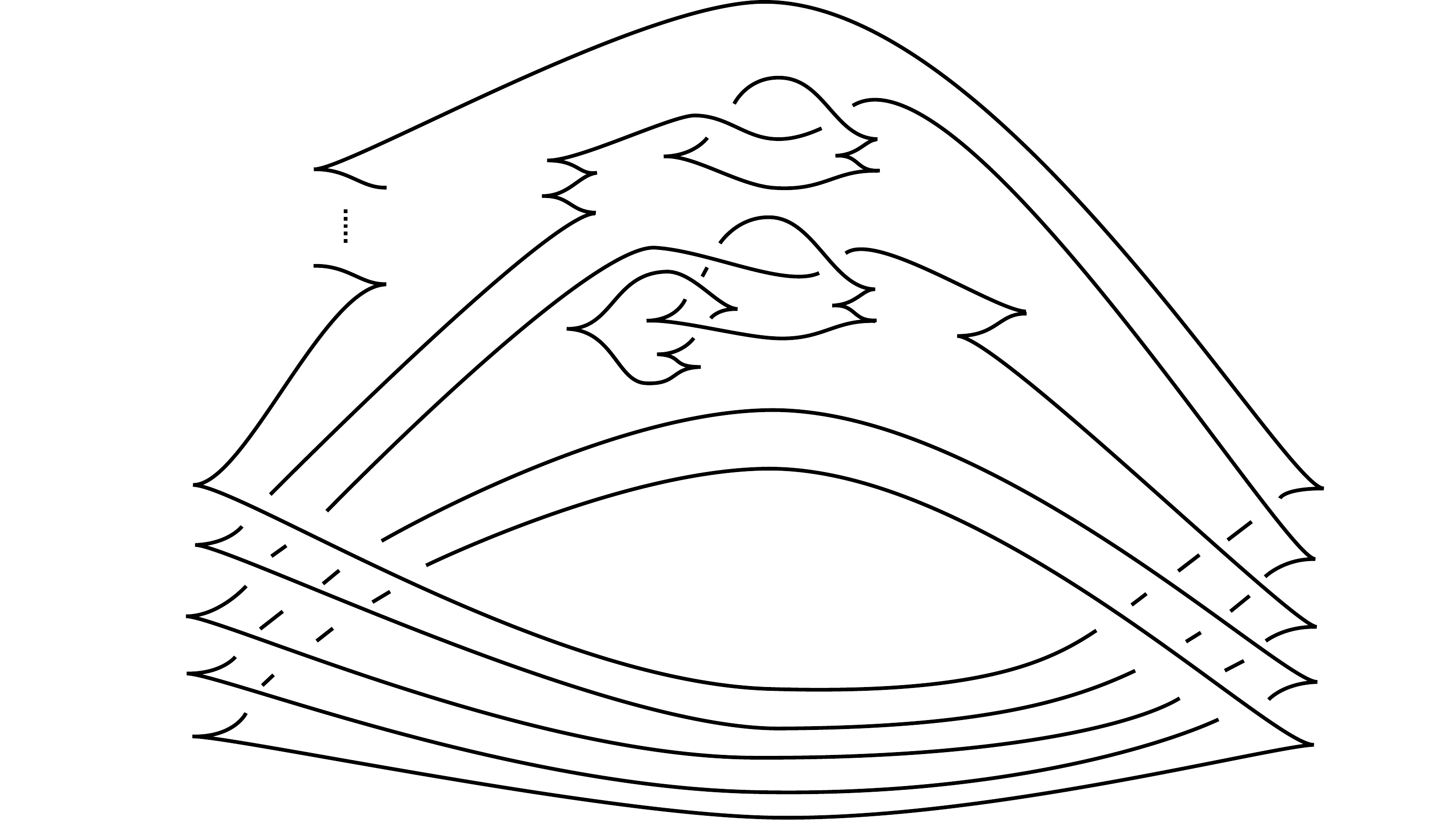}}%
    \put(0.04625657,0.44178385){\color[rgb]{0,0,0}\makebox(0,0)[lt]{\lineheight{1.25}\smash{\begin{tabular}[t]{l}$103-k$\end{tabular}}}}%
    \put(-0.00127886,0.40103922){\color[rgb]{0,0,0}\makebox(0,0)[lt]{\lineheight{1.25}\smash{\begin{tabular}[t]{l}$\text{stabilisations}$\end{tabular}}}}%
    \put(0.92760203,0.04549365){\color[rgb]{0,0,0}\makebox(0,0)[lt]{\lineheight{1.25}\smash{\begin{tabular}[t]{l}$(+1)$\end{tabular}}}}%
    \put(0.92690785,0.08570153){\color[rgb]{0,0,0}\makebox(0,0)[lt]{\lineheight{1.25}\smash{\begin{tabular}[t]{l}$(+1)$\end{tabular}}}}%
  \end{picture}%
\endgroup%

 \caption{\smaller[1]{A contact surgery presentation of $(S^3_k(T_{8,13}),\xi_k)$; the six unmarked Legendrian knots are contact $(-1)$-surgeries. When $k\leq35$ we have that $\widehat c(\xi_k)$ is non-vanishing while $c^+(\xi_k)=0$; hence, the structure $\xi_k$ is tight and zero-twisting but not fillable.}}
 \label{Structures}
 \end{figure}

 \begin{figure}[ht]
 \centering
  \def\svgwidth{7cm}
        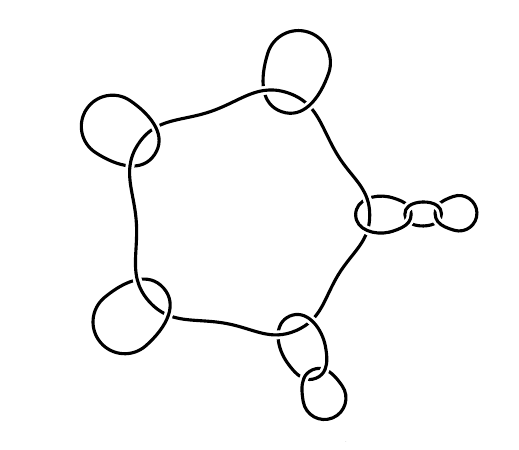
        \hspace{1cm}
        \begin{tikzpicture}[scale=0.9]
    \tkzDefPoints{0/0/A, 1.5/1/B, 1.5/-1/D, 1.5/0/C, 3/0/H, 3/1/N, 4.5/1/O, 0/-3/X}  
    \tkzDrawSegment(A,B)\tkzDrawSegment(A,D)\tkzDrawSegment(B,A)\tkzDrawSegment(A,C)\tkzDrawSegment(C,H)
    \tkzDrawSegment(B,N)\tkzDrawSegment(N,O)
    \tkzDrawPoints[fill,black,size=5](A,B,C,D,H,N,O)\tkzDrawPoints[fill,white,size=5](X)
     \tkzLabelPoint[above left](A){$-1$} 
     \tkzLabelPoint[above right](B){$-2$}\tkzLabelPoint[above right](H){$-3$}\tkzLabelPoint[above right](C){$-3$}
     \tkzLabelPoint[below right](D){$k-104$}\tkzLabelPoint[above right](N){$-3$}\tkzLabelPoint[above right](O){$-3$}
       \end{tikzpicture}
 \caption{\smaller[1]{Smooth surgery presentation (left) of $S^3_k(T_{8,13})\simeq M(-1;\frac{3}{8},\frac{8}{13},\frac{1}{104-k})$ corresponding to the surgery in Figure \ref{Structures}, after blowing up, and its standard graph $G$ (right).}}
 \label{Structures2}
 \end{figure}

In a similar way in \cite{GLS=>} it is also shown that the zero-twisting structures on $M(0,r_1,r_2,r_3)$ have a contact surgery presentation as the one described above, but without one of the two $(+1)$-surgeries. By modifying these constructions we can describe all the zero-twisting structures on $M(e_0,r_1,r_2,r_3)$ with $e_0\geq-1$; we do not give details about this generalisation as it is not relevant in this paper, but refer to \cite{GLS=>} for details.

There is not yet a complete classification of zero-twisting structures on these small Seifert fibred spaces in general, but either when the structures are fillable, or the manifold is an $L$-space, this has been done by Matkovi\v c in \cite{Irena,Irena(f)}. We describe the tools involved in such a classification (when $e_0=-1$) before stating the result. Let us consider the vector in $\text{Char}(G,\s_\xi)$ whose first coordinate (the one corresponding to the centre) is $1$, and the others are given by the rotation numbers in the chains of unknots and by one less than the rotation numbers of the last three components of $T_{5,-5}$ in the surgery presentation. 

Motivated by the work of Lisca and Stipsicz \cite{LSexist}, we describe an algorithm in \cite{Irena} that has the above vector as an input while it has as an output a vector $C\in\text{Char}(G^*,\s_\xi)$; we briefly sketch it here. By \cite[Lemma 3.2]{LSexist} the star-shaped plumbed $4$-manifolds $P_{G}$ embeds into some $R=\mathbb CP^2\#N\overline{\mathbb CP}^2$ in the way that its complement is diffeomorphic to $P_{G^*}$. More precisely, the configurations of the two intersection graphs $G$ and $G^*$ are obtained by blowing-up the initial lines $l_1,\dots,l_k\subset\mathbb CP^2: l_1\cap \cdots\cap l_k=\{p\}$, and $l\subset\mathbb CP^2: p\notin l$, as shown in Figure \ref{fig:mntR}.

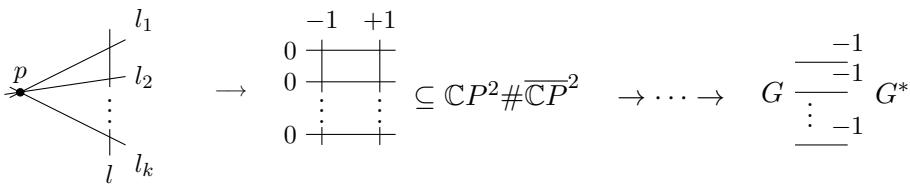
\begin{figure}[h]
\begin{tikzpicture}[scale=1]
\begin{scope}[shift={(-3.3,0)}, scale=0.7]
\filldraw[black] (0,0) circle (2pt)node[above,scale=0.9]{$p$};
\draw (-.3,-.04) -- (2,.3)node[right,scale=0.9]{$l_2$};
\draw (-.2,.1) -- (2,-1)node[below right, scale=0.9]{$l_k$};
\draw (-.2,-.1) -- (2,1) node[above right, scale=0.9]{$l_1$};
\draw (1.7,1.2) -- (1.7,.1) (1.7,-.7)--(1.7,-1.2) node[below,scale=0.9]{$l$};
\draw[white] (1.7,-.2) circle (0pt) node[black]{$\vdots$};
\end{scope}

\draw[white] (-.5,0) circle (0.1pt) node[black,scale=0.7]{$\longrightarrow$};

\begin{scope}[shift={(.5,.7)}, scale=0.7]
\draw (1.7,-.2) -- (0,-.2) node[left,scale=0.9]{$0$};
\draw (0.3,-2) -- (0.3,-1.66) (0.3,-1)--(0.3,0);
\draw[white] (.3,-1.2) circle (0pt) node[black]{$\vdots$};
\filldraw[black] (0.3,0) circle (0pt)node[above,scale=0.9]{$-1$};
\draw (1.4,-2) -- (1.4,-1.66) (1.4,-1)--(1.4,0);
\draw[white] (1.4,-1.2) circle (0pt) node[black]{$\vdots$};
\filldraw[black] (1.4,0) circle (0pt)node[above,scale=0.9]{$+1$};
\draw (1.7,-.8) -- (0,-.8)node[left,scale=0.9]{$0$};
\draw (1.7,-1.8) -- (0,-1.8)node[left,scale=0.9]{$0$};
\end{scope}	

\draw[white] (4,0) circle (0.1pt) node[black]{$\subseteq\mathbb CP^2\#\overline{\mathbb CP}^2\ \ \ \rightarrow\cdots\rightarrow$};

\begin{scope}[shift={(7,0)}]
\draw[white] (-.3,0) circle (0.1pt) node[black]{$G$};
\draw (0,0) -- (.7,0)node[above,scale=0.9]{$-1$};
\draw (0,.4) -- (.7,.4)node[above,scale=0.9]{$-1$};
\draw[white] (.2,-.2) circle (0pt) node[black]{$\vdots$};
\draw (0,-.7) -- (.7,-.7)node[above,scale=0.9]{$-1$};
\draw[white] (1.3,0) circle (0.1pt) node[black]{$G^*$};
\end{scope}	
\end{tikzpicture}

\caption{We have that $R\setminus\mathring{\nu(M)}=P_{G}\cup P_{G^*}$.}
\label{fig:mntR}
\end{figure}

Now, we can extend the vector we read from surgery presentation on $P_{G}$ to a cohomology class $\mathfrak c$ on the whole $R$, which we describe through its Poincar\'e dual by $\text{PD}(\mathfrak c):=\alpha h+\sum_{i=1}^N\alpha_i e_i$ where $h$ and $e_i$ denote the standard generators of $H_2(R;\mathbb Z)$ and $\alpha,\alpha_i\in\{\pm1\}$. The characteristic vector $C$ is identified with the restriction $\mathfrak c\lvert_{P_{G^*}}$; note that there is a natural correspondence between the $\Spin^c$-structures $\mathfrak c\lvert_M$ and $\mathfrak c\lvert_{-M}$.

From \cite{OSz-fullpath} one has that $[C]\in\Imm\psi^*\subset\widehat{HF}(-M,\s_\xi)$ where $[C]$ is the full path of $C$ and $\psi^*:HF^-\rightarrow\widehat{HF}$ is the map induced by the projection. For the terminology about the full path algorithm we refer to \cite{OSz-fullpath,CM-negative,CM}. In \cite[Corollary 1.4 and Theorem 2.2]{Irena} it is shown that $C$ is well-defined, up to steps in the full-path, and that for $L$-spaces  it is identified with the contact invariant $\widehat c$ of the structure. 
Such a vector has been informally called \emph{magic $C$}, and we are going to refer to it in this way in the paper. 

\begin{teo}[Matkovi\v c]
 \label{teo:Irena}
 Let $M=M(e_0;r_1,r_2,r_3)$ with $e_0\in\{-1,0\}$ be a Seifert fibred $L$-space. Then a contact structure $\xi$ on $M$ is tight if and only if the full path $[C]$ ends correctly; in other words, when $\widehat c(\xi)\neq0$. Furthermore, two tight structures $\xi_1$ and $\xi_2$ on $M$ are contact isotopic if and only if their magic $C$'s are in the same full path; in other words, when $\widehat c(\xi_1)=\widehat c(\xi_2)$.
\end{teo}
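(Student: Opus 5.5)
We prove Theorem \ref{teo:Irena} in two parts. First we identify the tight structures with those whose magic $C$ produces a correctly ending full path. Then we identify the isotopy classes of tight structures with the full paths themselves.

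\emph{Reduction to zero-twisting structures.} Every tight structure on $M=M(e_0;r_1,r_2,r_3)$ with $e_0\in\{-1,0\}$ is either negative-twisting or zero-twisting.
\begin{itemize}
\item The negative-twisting structures are Stein fillable, by Legendrian surgery on the standard graph presentation. For them the magic $C$ is simply the characteristic vector read off the Stein filling, extended over $R$.
\item For the zero-twisting structures we use the surgery presentations of \cite{LS} (for $e_0=-1$) and \cite{GLS=>} (for $e_0=0$). These give a finite list of candidate structures, indexed by the rotation numbers on $T_{5,-5}$, or on $T_{4,-4}$ when $e_0=0$, and on the three legs.
\end{itemize}
The upper bounds on the number of tight structures coming from convex decompositions \cite{Wu,GLS} show that every tight structure appears in one of these lists. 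It therefore suffices to decide, for each candidate, whether it is tight, and which candidates are isotopic.

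\emph{Tightness equals non-vanishing of $\widehat c$.} If $\widehat c(\xi)\neq0$ then $\xi$ is tight, so only the converse needs proof. Since $M$ is an $L$-space, \cite[Corollary 1.4 and Theorem 2.2]{Irena} identifies $\widehat c(\xi)$ with the class of the full path $[C]$ in $\widehat{HF}(-M,\s_\xi)$. This class is non-zero exactly when the full path of $C$ ends correctly, that is, it terminates at a vector with the terminal property rather than producing a coordinate at $\pm$(weight), which would kill it.

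So the claim becomes a combinatorial statement. For every surgery presentation that is tight, where tightness is certified by counting convex-surface realisations on the complement of the singular fibres, the full path of the magic $C$ ends correctly; and for every presentation that is overtwisted, it does not. We prove this by direct computation with the embedding $P_G\cup P_{G^*}\subset R$ of \cite[Lemma 3.2]{LSexist}. The rotation data on the legs determines the signs $\alpha_i$. The restriction to $G^*$ then has an explicit form, and its full path can be tracked leg by leg.

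\emph{Isotopy classification.} On an $L$-space, $\widehat{HF}(-M,\s)\cong\mathbb Z$ for every $\Spin^c$-structure $\s$, and its generator is represented by a unique full path. Hence two tight structures with $\widehat c(\xi_1)=\widehat c(\xi_2)$ have the same $\Spin^c$-structure and the same full path. Conversely, distinct full paths give distinct $\widehat c$.

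It remains to show that $\widehat c$ is injective on isotopy classes of tight structures. For this we count: the number of distinct correctly ending full paths obtained from the candidates must equal the upper bound on tight structures coming from convex surface theory. Equality gives both injectivity and realisation at once.

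\emph{Main obstacle.} The hardest step is the combinatorial matching. We must verify that the full path algorithm applied to the magic $C$ ends correctly exactly for those rotation choices that the convex-surface count allows, uniformly in the continued fraction data. We would first try induction on the leg lengths. Adding a $-2$ vertex at the end of a leg corresponds to a stabilisation choice, and we would check that the full path of $C$ is compatible with this extension; this reduces the problem to a few base cases near $T_{5,-5}$.
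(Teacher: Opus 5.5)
\paragraph{Verdict: genuine gap.} Your outline follows the architecture behind this theorem. The paper does not reprove it; it quotes it from \cite{Irena} for $e_0=-1$, with the convex-surface input of \cite{GLS,Wu}, and from \cite{GLS=>} for $e_0=0$, where only the analogous properties of magic $C$ need checking. The ingredients are an upper bound from convex surfaces, realisation by the diagrams of \cite{LS,GLS=>}, and the identification $\widehat c(\xi)=[C]$ of \cite[Corollary 1.4 and Theorem 2.2]{Irena}. As a proof, however, your text has a gap exactly where the content lies.

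\paragraph{The deferred counting step is the whole theorem.} The convex-surface count is only an upper bound on the number of tight isotopy classes. It says nothing about which individual diagram is tight, so it cannot ``certify'' tightness of a presentation as you suggest. Both ``tight $\Rightarrow$ $[C]$ ends correctly'' and the isotopy statement follow only once you show that the candidates realise as many distinct correctly ending full paths as that bound. This is precisely the step you leave as the ``main obstacle''. It also presupposes a bound that is actually attained, and you never identify one. The ``if'' half of the isotopy statement (same full path, or same $\widehat c$, implies isotopic) cannot come from Heegaard Floer theory, which can distinguish structures but never prove them isotopic. It must therefore be built into a sharp contact-topological upper bound. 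Your proposed induction on leg lengths is also not obviously workable, because the full path is not computed leg by leg. As the computation in Lemma \ref{lemma:magic_C} shows, steps at the central vertex keep feeding back into the legs.

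\paragraph{The reduction for negative-twisting structures is incorrect.} For $e_0\in\{-1,0\}$ the central vertex of the standard graph has framing $e_0$. Legendrian surgery on the standard graph would then need a Legendrian unknot with $\tb=e_0+1\geq0$ in the standard $S^3$, which does not exist, since $\tb\leq-1$ for unknots. Whatever presentation is used for the negative-twisting structures, it is not this one. For the same reason, ``the characteristic vector read off the Stein filling, extended over $R$'' is not defined. Magic $C$ is built from a characteristic vector on $P_G$ itself, via the embedding $P_G\cup P_{G^*}\subset R$ of \cite[Lemma 3.2]{LSexist}, and an arbitrary Stein filling comes with no such embedding. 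So even granting the count, your argument does not yet attach a well-defined $C$ to every tight structure on $M$.
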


In \cite{Irena} only $e_0=-1$ is addressed explicitly, because the classification of contact structures with $e_0=0$ was known before \cite{GLS=>}. However, magic $C$ can be defined analogously for $e_0=0$ and has the same properties; therefore, we include it in the statement of Theorem \ref{teo:Irena}.

Our goal now is to show that a similar result holds for any $M=M(-1;r_1,r_2,r_3)$. Since $M$ can either have indefinite or negative-definite standard graph, according to our work in \cite{CM} in the latter case we have $[C]\in\Ker\rho^*\subset\widehat{HF}(-M,\s_\xi)$ where $\rho^*:\widehat{HF}\rightarrow HF^+$ is induced by the inclusion. 

\begin{prop}
 \label{prop:zero}
 Let $M(-1;r_1,r_2,r_3)$ be a Seifert fibred space and $\xi$ be a contact structure presented as in \cite{LS}. We have that $\widehat c(\xi)\in\widehat{HF}(-M,\s_\xi)$ coincides with the magic $C$ associated to $\xi$; in particular, if the full path $[C]$ ends correctly then $\xi$ is tight and zero-twisting.     
\end{prop}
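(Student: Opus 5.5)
The plan is to realise $\xi$ as the result of $(-1)$-surgeries on a contact structure whose invariant we already control, and to track $\widehat c$ through the associated cobordism maps.

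\emph{Setting up the reduction.} Contact $(+1)$-surgery on the first two components of $T_{5,-5}$ gives a contact manifold $(Y,\zeta)$ with $Y=M(0;\cdot)$ (after one $(+1)$-surgery) or $M(1;\cdot)$-type. The Legendrian $(-1)$-surgeries on the remaining components and on the three chains then form a Stein cobordism $W$ from $(Y,\zeta)$ to $(M,\xi)$. By naturality of the contact invariant under Legendrian surgery,
\[
\widehat c(\xi)\mapsto \widehat c(\zeta)\quad\text{under}\quad \widehat F_{\overline W,\mathfrak t}:\widehat{HF}(-M)\to\widehat{HF}(-Y),
\]
where $\mathfrak t$ is the canonical $\Spin^c$-structure, whose first Chern class is given by the rotation numbers. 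Dually, it is more convenient to write $\widehat c(\xi)$ as the image of a generator under a cobordism map into $\widehat{HF}(-M)$. To do this I would use the description of $(+1)$-surgery on $T_{5,-5}$ components as a $2$-handle cobordism to $S^3$-side data. After blow-ups this is exactly the embedding $P_G\subset R=\mathbb CP^2\#N\overline{\mathbb CP}^2$ with complement $P_{G^*}$ recalled above. The key intermediate claim is then
\[
\widehat c(\xi)=F_{R\setminus B^4\setminus P_{G},\mathfrak c}(\Theta),
\]
that is, the contact invariant is the image of the top generator under the map induced by $P_{G^*}$ (punctured), twisted by $\mathfrak c|_{P_{G^*}}$. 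This is the same mechanism as in \cite{Irena}, and I would adapt that proof verbatim, since nowhere does it use the $L$-space hypothesis: the $L$-space assumption there only serves to conclude $\widehat{HF}=HF^-$-image identification and the classification.

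\emph{Identifying with the full path.} Next I would invoke \cite{OSz-fullpath}. For the negative-definite star-shaped graph $G^*$, the map induced by the plumbing on $\widehat{HF}(-M)$ is computed by lattice cohomology. The element $F_{P_{G^*},C}(\Theta)$ is represented by the full path $[C]$ of the characteristic vector $C=\mathfrak c|_{P_{G^*}}$. Combining this with the previous step gives $\widehat c(\xi)=[C]$. By \cite[Corollary 1.4]{Irena}, this is independent of the choices made, up to steps in the full path. Since \cite{OSz-fullpath} holds for any negative-definite plumbing with at most one bad vertex, and $G^*$ is star-shaped, no $L$-space hypothesis is required here.

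\emph{Conclusion.} If $[C]$ ends correctly, then $[C]\neq0$, so $\widehat c(\xi)\neq0$ and $\xi$ is tight. For zero-twisting, note that by \cite{LS} every structure presented this way has a Legendrian regular fibre of twisting $0$ (the presentation comes from such a fibre), so tightness is all that needs checking.

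I expect the main obstacle to be the first step: verifying that the $(+1)$-surgeries followed by $(-1)$-surgeries assemble into the cobordism $R\setminus(P_G\cup B^4)$ with the correct $\Spin^c$-structure $\mathfrak c$. In particular, one must show that the $\pm1$ coefficients $\alpha,\alpha_i$ pick out the single $\Spin^c$-structure whose map carries $\Theta$ to $\widehat c$, with no cancellations from other $\Spin^c$-structures restricting to $\s_\xi$. My first attempt would be to use the Ding–Geiges–Stipsicz formula for $(+1)$-surgery, which gives a contact invariant as a cobordism image for a specific $\Spin^c$-structure. I would then check the adjunction-type equality on each sphere in $G^*$ to rule out the other $\Spin^c$-structures.
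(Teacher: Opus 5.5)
The target formula is right: your $\widehat c(\xi)=F_{P_{G^*},C}(\Theta)$, with $C$ the magic vector, is exactly what the paper ends with. But the step that produces it is not proved. You defer it to a ``verbatim'' adaptation of \cite{Irena}, on the grounds that the $L$-space hypothesis is never used there. That step is the whole content of the proposition, and the claim is not credible as stated. The identification of $C$ with $\widehat c$ in \cite{Irena} is only available for $L$-spaces; otherwise this proposition would be a citation. In an $L$-space $\widehat{HF}(-M,\mathfrak s)\cong\mathbb F$, so a nonzero class is pinned down by its $\Spin^c$-structure, and this rigidity is gone for, e.g., $S^3_k(T_{8,13})$ with $k<83$. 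Your first paragraph offers no substitute: naturality for the Legendrian $(-1)$-surgeries sends $\widehat c(\xi)$ to $\widehat c(\zeta)$, which is the wrong direction. The $\Spin^c$/cancellation problem you flag at the end is also left open.

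The missing idea is to peel off a single $(+1)$-surgery. Deleting one of the two $(+1)$-surgered components leaves, by \cite{GLS=>}, a presentation of a structure $\xi'$ on $M'=M(0;r_1,r_2,r_3)$. This manifold \emph{is} an $L$-space, so Theorem~\ref{teo:Irena} gives $\widehat c(\xi')=[C']$. The deleted knot is a standard unknot $U$ in the complement, so $\widehat c(\xi)=\widehat F_{-W,\mathfrak j}(\widehat c(\xi'))$ by \cite{OSz-contact}. After blow-ups and blow-downs, $-W$ is a negative blow-up at the central $(-3)$-vertex of the graph of $-M'$, with $\mathfrak j$ given by the coordinate $1$. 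Composing with the plumbing, and using \cite[Proposition 2.5]{OSz-fullpath} and the blow-up formula, gives $\widehat F_{-W,\mathfrak j}[C']=\widehat F_{P',(1,C')}(1)=\widehat F_{P,C'+e_1}(1)$. One then checks from the algorithm that $C'+e_1$ is the magic $C$ of $\xi$. Only one handle with one relevant $\Spin^c$-structure is involved, so no cancellation analysis is needed.

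There is a second gap. You assert that \cite{OSz-fullpath} applies because $G^*$ is a negative-definite star with one bad vertex, but that holds only in half the cases. $G^*$ has central weight $-2$ and legs given by $1-r_i$, so it is negative-definite iff $e(-M)=1-\sum r_i<0$, i.e.\ iff the standard graph of $M$ is \emph{indefinite}. When the standard graph of $M$ is negative-definite, $G^*$ is not negative-definite. This is the case $k\leq -1$ in the application, including the Brieskorn sphere $\Sigma(8,13,105)$. There, two things fail to follow from \cite{OSz-fullpath}: reading $\widehat F_{P,C}(1)$ as the full path $[C]$, and the implication ``$[C]$ ends correctly $\Rightarrow$ $[C]\neq 0$'' that you use for tightness. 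Both require the extensions in \cite[Subsection 2.1 and Theorem 1.1]{CM} and \cite[Proposition 2.2]{CM-negative}, which is what the paper invokes.
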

\begin{proof}
  From the discussion above we know that there exists a structure $\xi'$ on $M'=M(0;r_1,r_2,r_3)$, which is an $L$-space, obtained by removing one of the two standard unknots where we perform $(+1)$-surgery in the presentation of $\xi$. This means that $\xi$ is gotten from $\xi'$ by doing $(+1)$-surgery on a standard unknot $U$ in the complement of its contact surgery presentation.  

  From \cite{OSz-contact} the map induced by contact $(+1)$-surgery is $\widehat F_{-W,\mathfrak j}:\widehat{HF}(-M',\s_{\xi'})\rightarrow\widehat{HF}(-M,\s_\xi)$ and $\widehat F_{-W,\mathfrak j}(\widehat c(\xi'))=\widehat c(\xi)$, where $W$ is the cobordism defined by attaching a 2-handle along $U$ with framing zero and $\mathfrak j$ is the $\Spin^c$-structure on $W$ extending $\s_\xi$ and $\s_{\xi'}$ given by the coordinate zero. Applying a sequence of blow-ups and blow-downs as in Figure \ref{Structures2}, we have that $(W,\mathfrak j)$ is diffeomorphic to the 2-handle attachment along an unknot with framing $1$, linked to the central vertex of the standard graph of $M'$, with $\Spin^c$-structure corresponding to the coordinate $1$.

  Swapping the orientation of $W$, we have that the cobordism $-W$ is induced by a negative blow-up on the central vertex (whose framing is $-3$) of the standard graph of $-M'$. Let us denote by $P'$ the plumbing associated to this blown up graph for $-M$, and $\mathfrak j'\in\Spin^c(P')$ defined by the characteristic vector $(1,C')$; since $M'$ is an $L$-space, we can use Theorem \ref{teo:Irena} and then obtain that \[\widehat c(\xi)=\widehat F_{-W,\mathfrak j}[C']=\widehat F_{P',\mathfrak j'}(1)=\widehat F_{P,\mathfrak u}(1)\:,\] where $P$ is the plumbing associated to the standard graph of $-M$ and $\mathfrak u$ is defined by the characteristic vector $C'+e_1$. The last equality comes from a result of Ozsv\'ath and Szab\'o \cite[Proposition 2.5]{OSz-fullpath}, see also \cite[Subsection 2.1]{CM}, and the commutativity of the strict transform \cite[Section 5]{BP}. 

  In order to prove the first claim we just need to argue that the vector $C'+e_1$, obtained by increasing by one the first coordinate of $C'$, is precisely the magic $C$ associated to $\xi$. This follows easily from the algorithm for $C$ described above. The second claim is a consequence of \cite[Proposition 2.2]{CM-negative} and \cite[Theorem 1.1]{CM} which tell us that a full path ending correctly is identified with a non-zero element in $\widehat{HF}$.
\end{proof}
 
Applying Proposition \ref{prop:zero}, we can now prove the tightness of the structures $\xi_k$ for $k\leq98$ in Figure \ref{Structures}. We have that the vector of the rotation numbers is \[\begin{pmatrix}\begin{array}{c|ccc|cc|c}1 & -2 & -1 & -1 & 1 &-1 & 102-k\end{array}\end{pmatrix}\] which is characteristic for the standard graph of $S^3_k(T_{8,13})$ in Figure \ref{Structures2}. The algorithm described above, see \cite{Irena} for more details, gives that the magic $C$ associated to $\xi_k$ is \[C_k=\begin{pmatrix}\begin{array}{c|ccc|ccc|ccc} -2 & -1 & -1 & 0 & 2 & 1 & -2 & 2 & 0 & \cdots\end{array}\end{pmatrix}\] where $C_k$ ends with $102-k$ zeros.  

\begin{figure}[t]
\begin{tikzpicture}[scale=0.9]
    \tkzDefPoints{0/0/A, 1.5/1/B, 1.5/-1/D, 3/-1/E, 5/-1/F, 1.5/0/C, 3/1/G1, 4.5/1/G2, 3/0/H1, 4.5/0/H2}  
    \tkzDefPoints{3.5/-1/X, 4.5/-1/Y}\tkzDefPoints{3.9/-1/P, 4/-1/Q, 4.1/-1/R}
    \tkzDrawPoints[fill,black,size=1](P,Q,R)
    \tkzDrawSegment(A,B)\tkzDrawSegment(A,D)\tkzDrawSegment(E,D)\tkzDrawSegment(E,X)\tkzDrawSegment(Y,F)\tkzDrawSegment(A,C)\tkzDrawSegment(B,G1)\tkzDrawSegment(G2,G1)\tkzDrawSegment(C,H1)\tkzDrawSegment(H2,H1)
    \tkzDrawPoints[fill,black,size=5](A,B,C,D,E,F,G1,G2,H1,H2)
     \tkzLabelPoint[above left](A){$-2$} \tkzLabelPoint[above left](B){$-3$}\tkzLabelPoint[above right](C){$-2$}
     \tkzLabelPoint[below left](D){$-2$}\tkzLabelPoint[below left](E){$-2$}\tkzLabelPoint[below left](F){$-2$}\tkzLabelPoint[above left](G1){$-3$}\tkzLabelPoint[above left](G2){$-2$}\tkzLabelPoint[above right](H1){$-3$}\tkzLabelPoint[above right](H2){$-2$}
\end{tikzpicture}
     \caption{\smaller[1]{The standard graph $G^*$ of $-S^3_k(T_{8,13})\simeq M(-2;\frac{5}{8},\frac{5}{13},\frac{103-k}{104-k})$. There are $103-k$ vertices in the third leg, all with framing $-2$.}}
     \label{G}
\end{figure}
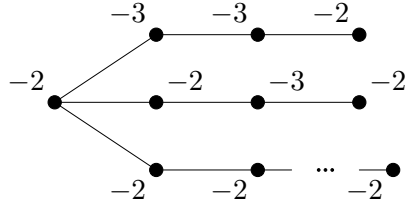 

\begin{lemma}
  \label{lemma:magic_C}
  The full path of the vector $C_k\in\emph{Char}(G^*,\s_\xi)$ ends correctly for every $k\leq98$, where here $G^*$ is the graph appearing in Figure \ref{G}.
\end{lemma}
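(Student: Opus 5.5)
The plan is to run the Ozsv\'ath--Szab\'o full path algorithm \cite{OSz-fullpath} explicitly on $C_k$ and show that it terminates at a vector $K$ with $m(v)+2\leq K(v)\leq -m(v)-2$ for every vertex $v$ of $G^*$, where $m(v)$ is the framing of $v$. Recall the algorithm. At each stage we look for a vertex $v$ with $K(v)=-m(v)$. For a $(-2)$-vertex this means $K(v)=2$; for a $(-3)$-vertex it means $K(v)=3$. If such a $v$ exists we replace $K$ by $K+2\,\mathrm{PD}(v)$. This sends the $v$-coordinate to $m(v)$ and raises each neighbouring coordinate by $2$. The path fails as soon as some coordinate exceeds $-m(v)$.

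I order the coordinates of $C_k$ as in the statement. The first entry is the centre $(-2)$. Next come the leg $B,G_1,G_2$ with framings $(-3,-3,-2)$ and the leg $C,H_1,H_2$ with framings $(-2,-3,-2)$, in whichever assignment matches the blocks $(-1,-1,0)$ and $(2,1,-2)$. Last comes the long leg $D,E,\dots,F$ of $103-k\geq5$ vertices of framing $-2$, carrying $(2,0,\dots,0)$. I will first double-check that $C_k$ is characteristic for $G^*$, i.e.\ that $C_k(v)\equiv m(v)\pmod 2$, which fixes the assignment of blocks to legs.

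The key steps, in order, are as follows.

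\textbf{Step 1.} Start from the first vertex of the long leg, where the coordinate is $2$. Adding $2\,\mathrm{PD}(D)$ flips it to $-2$ and raises the centre and the next vertex of the chain by $2$. Each later vertex of the chain then carries a $2$ and must be flipped in turn. I will prove by induction on the position in the chain that the ``$2$'' travels to the end of the leg. What it leaves behind is a pattern of the form $(\ast,0,\dots,0,-2)$ or similar, and the centre receives a bounded, $k$-independent contribution. This induction is what makes the argument uniform in $k$: the length $103-k$ only affects how many identical moves are made.

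\textbf{Step 2.} Track the centre and the two short legs. After Step 1 the centre coordinate (originally $-2$) has increased, and the entry $2$ on the $(-2)$-vertex of the second leg may also trigger. I will carry out this finite, $k$-independent sequence of moves by hand. Each time the centre reaches $2$ it must be flipped, which pushes $+2$ into all three legs, including the first vertex of the long leg. That re-triggers Step 1 along the chain, so the process may cycle a bounded number of times. Here I will use the fact that full paths are independent of the order of moves \cite{OSz-fullpath}, so I may choose the most convenient order.

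\textbf{Step 3.} Check the terminal vector. On the $(-2)$-vertices every entry must lie in $\{0\}$ (good range $[0,0]$); on the $(-3)$-vertices every entry must lie in $\{-1,1\}$. Strictly, the bad condition is only exceeding $-m(v)$, with the remaining entries lying within the good range.

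The main obstacle is Step 2: the interaction of the centre with the long chain. Each flip of the centre sends a new pulse down the leg of length $103-k$, and I must show that only finitely many such rounds occur and that no $(-3)$-vertex ever reaches $5$. I would handle this by isolating a single round (centre flip, propagation down the chain, reflection of the new values at the end of the leg) as a lemma with an explicit before-and-after vector. The condition $k\leq98$, i.e.\ at least five vertices on the long leg, should be exactly what this lemma needs, so that the pulses do not interfere near the centre.
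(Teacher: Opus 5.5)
Running the Ozsv\'ath--Szab\'o algorithm by hand is also what the paper does, and your picture of the forward dynamics is roughly right. As written, though, the argument has two genuine gaps.

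\textbf{Gap 1: you only check one end of the path.} You treat $C_k$ as the start of its full path and follow it forward only. But a full path starts at a vector with $m(v)+2\leq K(v)\leq -m(v)$, and $C_k=(-2|-1,-1,0|2,1,-2|2,0,\dots,0)$ is not of this form. Its entries at the centre and at the last vertex of the second short leg equal $m(v)=-2$. So $C_k$ lies in the interior of $[C_k]$. You must also follow the path backwards, with steps $K\mapsto K-2\,\mathrm{PD}(v)$ whenever $K(v)=m(v)$. Equivalently, run the forward algorithm on $-C_k$, and check that this end is good too. This is the first half of the paper's proof. From $-C_k$ one fires the last vertex of the second leg, then the centre, then the three vertices of the first leg, arriving at $(0|-1,-1,-2|0,1,-2|0,\dots,0)$. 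Your plan never addresses this end, so it cannot conclude that $[C_k]$ ends correctly.

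\textbf{Gap 2: your stopping criterion is wrong.} A path ends correctly at $K$ with $m(v)\leq K(v)\leq -m(v)-2$, not $m(v)+2\leq K(v)\leq -m(v)-2$; you have intersected the initial and terminal ranges. The admissible final values are $\{-2,0\}$ on $(-2)$-vertices and $\{-3,-1,1\}$ on $(-3)$-vertices. This is not cosmetic. The forward path from $C_k$ actually stops at $(0|1,1,0|0,1,0|0,\dots,0,-2,0,0,0,0)$. The backward end above has $-2$ at the last vertices of both short legs. Your Step 3, as stated, would reject both.

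\textbf{A correction to your picture of a round.} The pulses do not ``reflect'' at the end of the long leg. After the first sweep that leg reads $(0,\dots,0,-2)$. Each later firing of the centre sends a $2$ down the leg, which stops just before the $-2$; this moves the $-2$ one vertex towards the centre and returns $+2$ to the centre. There are five sweeps in all, so the $-2$ ends at the $(99-k)$-th vertex of the long leg. This is where $k\leq 98$ is used in the paper's computation.
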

\begin{proof}
 According to the definition in \cite{OSz-fullpath} and our notation in \cite{CM-negative}, the full path $[C_k]$ ends correctly if and only if following the path from the vectors $C_k$ and $-C_{k}$ leads to a vector $(a_1,...,a_{|G|})$ such that $m(i)\leq a_i\leq -m(i)-2$ for each $i=1,...,|G^*|$, where $m(i)$ is the framing of the $i$-th vertex of $G^*$. We recall that given a characteristic vector $V$ we take a step from $V$ to $V+2Q_*e_i$ when $v_i=-m(i)$, where $Q_*$ is the intersection matrix of $G^*$. Let us start from $-C_k$: \[-C_k=\begin{pmatrix}\begin{array}{c|ccc|ccc|ccc} 2 & 1 & 1 & 0 & -2 & -1 & 2 & -2 & 0 & \cdots\end{array}\end{pmatrix}\overset{7}{\longrightarrow}\] \[\begin{pmatrix}\begin{array}{c|ccc|ccc|ccc} 2 & 1 & 1 & 0 & -2 & 1 & -2 & -2 & 0 & \cdots\end{array}\end{pmatrix}\overset{1}{\longrightarrow}\] \[\begin{pmatrix}\begin{array}{c|ccc|ccc|ccc} -2 & 3 & 1 & 0 & 0 & 1 & -2 & 0 & 0 & \cdots\end{array}\end{pmatrix}\overset{2,3,4}{\longrightarrow}\] \[\begin{pmatrix}\begin{array}{c|ccc|ccc|ccc} 0 & -1 & -1 & -2 & 0 & 1 & -2 & 0 & 0 & \cdots\end{array}\end{pmatrix}\:.\]
 The dots always represent a string of zeros. Let us continue with $C_k$ itself: \[C_k=\begin{pmatrix}\begin{array}{c|ccc|ccc|ccc} -2 & -1 & -1 & 0 & 2 & 1 & -2 & 2 & 0 & \cdots\end{array}\end{pmatrix}\overset{5,6}{\longrightarrow}\] \[\begin{pmatrix}\begin{array}{c|ccc|ccc|ccc} 0 & -1 & -1 & 0 & 0 & -3 & 0 & 2 & 0 & \cdots\end{array}\end{pmatrix}\overset{3\text{rd leg}}{\longrightarrow}\] \[\begin{pmatrix}\begin{array}{c|ccc|ccc|ccc} 2 & -1 & -1 & 0 & 0 & -3 & 0 & 0 & \cdots & -2\end{array}\end{pmatrix}\overset{1}{\longrightarrow}\] \[\begin{pmatrix}\begin{array}{c|ccc|ccc|ccc} -2 & 1 & -1 & 0 & 2 & -3 & 0 & 2 & \cdots & -2\end{array}\end{pmatrix}\overset{5}{\longrightarrow}\] \[\begin{pmatrix}\begin{array}{c|ccc|ccc|ccc} 0 & 1 & -1 & 0 & -2 & -1 & 0 & 2 & \cdots & -2\end{array}\end{pmatrix}\overset{3\text{rd leg}}{\longrightarrow}\] \[\begin{pmatrix}\begin{array}{c|ccc|ccc|ccc} 2 & 1 & -1 & 0 & -2 & -1 & 0 & \cdots & -2 & 0 \end{array}\end{pmatrix}\overset{1}{\longrightarrow}\] \[\setcounter{MaxMatrixCols}{30}\begin{pmatrix}\begin{array}{c|ccc|ccc|cccc} -2 & 3 & -1 & 0 & 0 & -1 & 0 & 2 & \cdots & -2 & 0 \end{array}\end{pmatrix}\overset{2}{\longrightarrow}\] \[\setcounter{MaxMatrixCols}{30}\begin{pmatrix}\begin{array}{c|ccc|ccc|cccc} 0 & -3 & 1 & 0 & 0 & -1 & 0 & 2 & \cdots & -2 & 0 \end{array}\end{pmatrix}\overset{3\text{rd leg}}{\longrightarrow}\]
 \[\setcounter{MaxMatrixCols}{30}\begin{pmatrix}\begin{array}{c|ccc|ccc|cccc} 2 & -3 & 1 & 0 & 0 & -1 & 0 & \cdots & -2 & 0 & 0 \end{array}\end{pmatrix}\overset{1}{\longrightarrow}\]
 \[\setcounter{MaxMatrixCols}{30}\begin{pmatrix}\begin{array}{c|ccc|ccc|ccccc} -2 & -1 & 1 & 0 & 2 & -1 & 0 & 2 & \cdots & -2 & 0 & 0 \end{array}\end{pmatrix}\overset{5}{\longrightarrow}\]
 \[\setcounter{MaxMatrixCols}{30}\begin{pmatrix}\begin{array}{c|ccc|ccc|ccccc} 0 & -1 & 1 & 0 & -2 & 1 & 0 & 2 & \cdots & -2 & 0 & 0 \end{array}\end{pmatrix}\overset{3\text{rd leg}}{\longrightarrow}\]
 \[\setcounter{MaxMatrixCols}{30}\begin{pmatrix}\begin{array}{c|ccc|ccc|ccccc} 2 & -1 & 1 & 0 & -2 & 1 & 0 & \cdots & -2 & 0 & 0 & 0 \end{array}\end{pmatrix}\overset{1}{\longrightarrow}\]
 \[\setcounter{MaxMatrixCols}{30}\begin{pmatrix}\begin{array}{c|ccc|ccc|cccccc} -2 & 1 & 1 & 0 & 0 & 1 & 0 & 2 & \cdots & -2 & 0 & 0 & 0 \end{array}\end{pmatrix}\overset{3\text{rd leg}}{\longrightarrow}\]
 \[\begin{pmatrix}\begin{array}{c|ccc|ccc|cccccc} 0 & 1 & 1 & 0 & 0 & 1 & 0 & \cdots & -2 & 0 & 0 & 0 & 0 \end{array}\end{pmatrix}\:.\]
\end{proof}

\begin{proof}[Proof of Theorem \ref{teo:main} and Corollary \ref{cor:main}]
 The corollary follows immediately from Theorems \ref{teo:old} and \ref{teo:main}. The fact that $\widehat c(\xi_k)\neq0$ is a consequence of Proposition \ref{prop:zero} and Lemma \ref{lemma:magic_C}. We now show Property 1): we need to distinguish between the cases where the standard graph of $S^3_k(T_{8,13})$ is either negative-definite (when $k\leq-1$) or indefinite (when $1\leq k\leq35$). 

 Let us assume that $k\leq-1$; then we have that $c^+(\xi_k)=\rho^*(\widehat c(\xi_k))=0$, see \cite[Equation 2.2 and Theorem 2.1]{CM} (this holds true also when $k=0$ by \cite[Lemma 2.3]{CM} as $C_0$ is non-torsion on the boundary). Let us assume that $1\leq k\leq35$, the manifold $-S^3_k(T_{8,13})$ is presented by the graph $G^*$ in Figure \ref{G} which in this case is negative-definite with one bad vertex; then it follows from the main result in \cite{OSz-fullpath}, see also \cite{CM-negative} for details, that $c^+(\xi_k)\in HF^+(-S^3_k(T_{8,13}),\s_{\xi_k})$ is non-zero if and only if $c^+(\xi_k)=\Theta^+_{\s_{\xi_k}}$, and this happens exactly when \[M(C_k)=d:=d(-S^3_k(T_{8,13}),\s_{\xi_k})\] where $d$ is the correction term. We recall that for a 3-manifold $(Y,\s)$ we define $\Theta_\s^+$ as the only non-zero class in $\Ker U\cap U^n\cdot HF^+(Y,\s)$ for any $n\geq0$.
 
 Denote by \[\mathcal S=\{V\in\text{Char}(G,\:\s_{\xi_k})\:|\:[V]\text{ ends correctly}\}\] where $G$ is on the right in Figure \ref{Structures2}; using standard linear algebra, when $1\leq k\leq35$ we can write \[M(C_k)=\dfrac{-k^2+153k-5184}{4k}<-\frac{15}{2}<-\dfrac{V_k^TQ_{G}^{-1}V_k+|G|-6}{4}\leq-\min_{V\in\mathcal S}M(V)<d\] where the full path of \[V_k=\begin{pmatrix}1 & 0 & -1 & -1 & -1 & -1 & 52-k\left(1+2\left\lfloor\frac{36}{k}\right\rfloor\right)\end{pmatrix}\in\text{Char}(G,\s_{\xi_k})\] ends correctly, the formula for $M(V)$ comes from \cite[Equation 3.1]{CM}, while the last inequality from \cite[Proposition 2.2]{CM-negative} and the fact that $d-M(V)$ is odd \cite[Theorem 2.1]{CM}.

 Property 2) follows immediately from above, because we show precisely that in these cases the $d_3$-invariant is not the same as the corresponding correction term.
 Regarding Property 3): we know that these manifolds are not $L$-spaces for any $k<83$, and the structures we are considering are not fillable because $S_k^3(T_{8,13})$ carries no such structure from the criterion in \cite[Theorem 1.1]{Irena(f)}; namely, that the standard graph $G$ does not contain a subgraph with two complementary legs (this obstruction also holds when $k=0$).
\end{proof}

Note that $\xi_k$ is certainly not the only zero-twisting tight structure on $S^3_k(T_{8,13})$; in fact, the same is true for the conjugate $\overline\xi_k$, which is not contact isotopic to $\xi_k$ because its magic $C$ is in the full path $[-C_k]$, and we immediately see that $[C_k]\neq[-C_k]$ thus $\widehat c(\xi_k)\neq\widehat c(\overline\xi_k)$.

\end{document}